\title{Coarse grid corrections in Krylov subspace evaluations of the matrix exponential}
\author{%
M.A. Botchev%
\thanks{%Corresponding author.
Keldysh Institute of Applied Mathematics of 
Russian Academy of Sciences, Miusskaya~Sq.~4, Moscow 125047,
Russia, \email{botchev@kiam.ru}.}
}
\newcommand{\Cc}{\mathbb{C}}
\newcommand{\eps}{\epsilon}
\newcommand{\fs}{\footnotesize}
\newcommand{\geqs}{\geqslant}
\newcommand{\leqs}{\leqslant}
\newcommand{\phiRT}{\texttt{phiRT}}
\newcommand{\rcg}{r_{\mathrm{cg}}}
\newcommand{\Rr}{\mathbb{R}}
\newcommand{\Rrnn}{\mathbb{R}^{n\times n}}
\newcommand{\RrNN}{\mathbb{R}^{N\times N}}
\newcommand{\tol}{\mathtt{tol}}
\newcommand{\yex}{y_{\mathrm{ex}}}
\newcommand{\ymg}{y_{\mathrm{mg}}}
\newcommand{\yref}{\tilde{y}_{\mathrm{ref}}}
\newcommand{\ytex}{\tilde{y}_{\mathrm{ex}}}
\newtheorem{note}{Note}
\begin{document}
\maketitle

\centerline{\emph{Dedicated to Victor Timofeevich Zhukov 
  on the occasion of his 70th birthday}}

\begin{abstract}
A coarse grid correction (CGC) approach is proposed to enhance the efficiency
of the matrix exponential and $\varphi$ matrix function evaluations.
The approach is intended for iterative methods computing the
matrix-vector products with these functions. It is based
on splitting the vector by which the matrix function is
multiplied into a smooth part and a remaining part.
The smooth part is then handled on a coarser grid,
whereas the computations on the original grid are carried out with
a relaxed stopping criterion tolerance.  Estimates on the error
are derived for the two-grid and multigrid variants
of the proposed CGC algorithm.
Numerical experiments demonstrate the efficiency of the algorithm,
\added{when employed in combination with Krylov subspace and Chebyshev
polynomial expansion methods}. 
\end{abstract}

\begin{keywords}
 matrix exponential, phi matrix function, multigrid, Krylov subspace methods, exponential residual, exponential time integration 
\end{keywords}

\begin{AMS}
  65F60;  % Numerical computation of matrix exponential and similar matrix funtcions
  65M20;  % MOL
  65M55   % Multigrid methods; domain decomposition for initial value and initial-boundary value problems involving PDEs
\end{AMS}

\section{Introduction}
This paper presents an approach to use spatial multigrid techniques for
computing matrix-vector products with the matrix exponential and
$\varphi$ matrix function.  This approach is intended for iterative
methods computing the matrix-vector products with these matrix functions,
which appear abundantly, for instance, in exponential time integration 
of spatially discretized PDEs~\cite{HochbruckOstermann2010}.

Our approach is based on splitting the vector, by which the matrix function has to be
multiplied, into a smooth part, which can be well represented on a coarser
spatial grid, and a remaining, nonsmooth part. 
Computational gain is then achieved, because the smooth part is handled
on a coarser grid, whereas only the remaining nonsmooth part is handled on the original
fine grid with, typically, a relaxed accuracy tolerance.
In fact, as will be clear, the smaller the nonsmooth part is in norm,
the more relaxed tolerance can be used.
To estimate the error caused by the coarse grid solution part, the exponential
residual concept is
used~\cite{CelledoniMoret97,DruskinGreenbaumKnizhnerman98,BGH13}.
For our two-grid method, we show that its error is bounded by the terms
whose norm is controlled by the tolerance
in both coarse and fine grid solvers plus a term proportional to
$$
\|(Q\tilde{A}- AQ)\ytex(t)\|.
$$
Here $Q$ is the coarse-to-fine grid prolongation operator, $A$ and $\tilde{A}$
are respectively the fine and coarse grid matrices and $\ytex(t)$ is
the exact coarse grid solution (more precisely, it is the matrix function
times the smooth coarse-grid part of the given vector).
Thus, the accuracy of our method is restricted and depends on how much
the fine and the coarse grid solutions differ.  Nevertheless, numerical experiments
show that for moderate accuracy requirements, typical for solving large scale PDEs,
our approach can be very efficient.
Furthermore, we propose a procedure to estimate the coarse grid correction
error $\|(Q\tilde{A}- AQ)\ytex(t)\|$ in practice.

Multigrid techniques have been applied to the solution of time-dependent problems
since the appearance of multigrid.
The work of R.P.~Fedorenko~\cite{Fedorenko1961,Fedorenko1964},
the first papers describing the multigrid method as we know it
now~\cite[Section~10.9.2]{HackbuschBookIter}, is devoted to the solution of Poisson
equations arising in time integration of 2D incompressible hydrodynamics
equations~\cite{Fedorenko_hist}.
Currently, multigrid methods form a major tool for efficient implementation
of implicit and semi-implicit time integration schemes
on parallel
supercomputers~\cite{AmaladasKamath1999,Gerlinger_ea2001,ZhukovNovFeod2015,ZhukovFeodor2021}.

Multigrid time integration ideas have been known at least since the middle
eighties~\cite{Hackbusch84,LubO87,JanssenVandewalle1996,JanssenVandewalle1997}.
The approach proposed in these works is essentially based on the
waveform relaxation methods~\cite{White_ea1985,Vandewalle1993}, also known as
dynamic iteration methods~\cite{MiekkalaNevanlinna1987}.
The method we propose here is different in the sense that it is designed specifically for
iterative methods evaluating the matrix exponential and $\varphi$ function
and does not employ the waveform relaxation framework.
Thus, implementation issues typical for the waveform relaxation methods,
such as storing approximate solutions across time efficiently and accurately,
do not have to be addressed.  This allows to keep our approach rather
simple.
However, one essential similarity of our approach to that of~\cite{Hackbusch84,LubO87}
is that the residual concept is crucial in both settings.
% (note that the residual is called defect in~\cite{LubO87}).
Note that the multigrid methods have been playing a key role in the recent revival
of time-parallel methods, see, e.g.,~\cite{Falgout_ea2014,Minion_ea2015,Gander2015}.

This paper is organized as follows.
\added{In the remaining part of this section a brief introduction to the basic
ideas of the multigrid method is given.}
The next section is devoted to the problem
setting and some preliminaries concerning iterative \deleted{Krylov subspace} evaluation
of the $\varphi$ function. 
To be specific in our presentation, we consider only the $\varphi$ matrix function
evaluations.
This covers the case of the matrix exponential, as the key relation 
being evaluated reduces
to a matrix exponential action for the source vector set to zero
(namely, formula~\eqref{yt} with $g=0$).
In Section~\ref{s:CGC}, our coarse grid correction algorithm is presented
and analyzed, first its two-grid and then its multigrid versions.
Numerical experiments and their results are discussed in Section~\ref{s:num_exp}.
The last section contains some conclusions and an outlook to
further research.

\subsection{\added{Basic multigrid concepts}}
Since the approach presented here is essentially based on an analogy with
multigrid methods for solving linear systems, we start with briefly discussing
how a simple multigrid version, called a two-grid 
method
(\cite[Chapter~10.2]{HackbuschBookIter}, \cite[Chapter~2.2]{Trottenberg_ea2001}),
is employed to solve
a linear system
\begin{equation}
\label{ls}
A x = b,  
\end{equation}
with a nonsingular $A\in\RrNN$ and $b\in\Rr^N$ given.
We assume here that the system stems from a PDE discretization on a certain
grid and that a coarser grid discretization is available with 
a nonsingular $\tilde{A}\in\Rr^{n\times n}$, $n<N$.
By $Q\in\Rr^{N\times n}$ we denote a matrix which interpolates an approximate
solution $\tilde{x}\in\Rr^n$ defined on the coarse grid to an approximate
solution $Q\tilde{x}\in\Rr^N$ on the fine grid.  Then $Q^T$ can be seen
as an opposite operation, which restricts a fine grid solution to the coarse
grid.
In multigrid methods $Q$ and $Q^T$ are called respectively prolongation and
restriction operators between the two grids.
In general, these operators do not have to be linear
(in which case their action, of course, cannot be realized
by a matrix-vector multiplication).
For simplicity of presentation prolongation and restriction are
assumed to be linear in this section.

Let $A$ be such that a conventional iterative scheme
\begin{equation}
\label{it}
x_{\text{next}} = M^{-1}(M-A)x_{\text{current}} + M^{-1}b,
\end{equation}
where $M\in\RrNN$ is a nonsingular matrix, converges to the solution of~\eqref{ls}.
Here $M$ represents a part of $A$ such that systems with $M$ can be solved easily
(for instance, $M$ being the diagonal part of $A$ yields the Jacobi iteration).
The multigrid method is based on the key observation that iterative schemes~\eqref{it}
typically have the so-called smoothing properties: the ``high-frequency'' components 
of the residual $r=b-Au$, $u\approx x$, decay much faster than the ``low-frequency'' components.
For symmetric positive definite $A$ (usually being a discretization of an elliptic partial
differential operator) the low-frequency components are often defined as the eigenvector
contributions corresponding to the half smallest in magnitude eigenvalues,
and the high-frequency components correspond to the other half. 
For a more detailed discussion %of this smoothing phenomenon 
see, e.g., \cite[relation~(9.5)]{Fedorenko1973}, \cite[Chapter~2.1]{Trottenberg_ea2001},
or \cite[Section~10.1.1]{HackbuschBookIter}.
In Figure~\ref{f:smooth} we present an illustration of this smoothing effect.
As the bottom right plot in Figure~\ref{f:smooth} suggests, a good smoothing
can also be obtained by applying the restriction followed by prolongation
operations.  This will be essentially used in our coarse grid correction
approach presented below.

\begin{figure}
\begin{center}
\begin{tabular}{cc}
\includegraphics[width=0.45\linewidth]{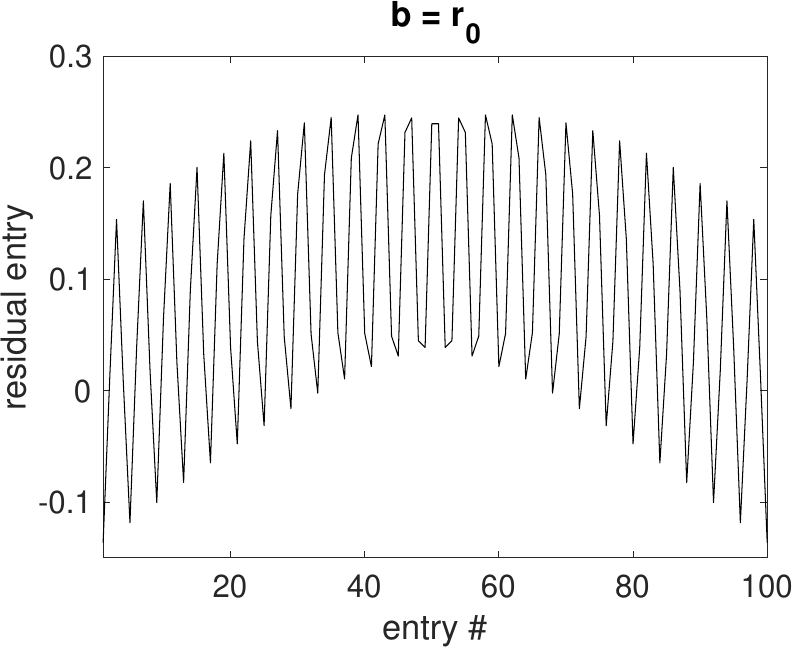} &
\includegraphics[width=0.45\linewidth]{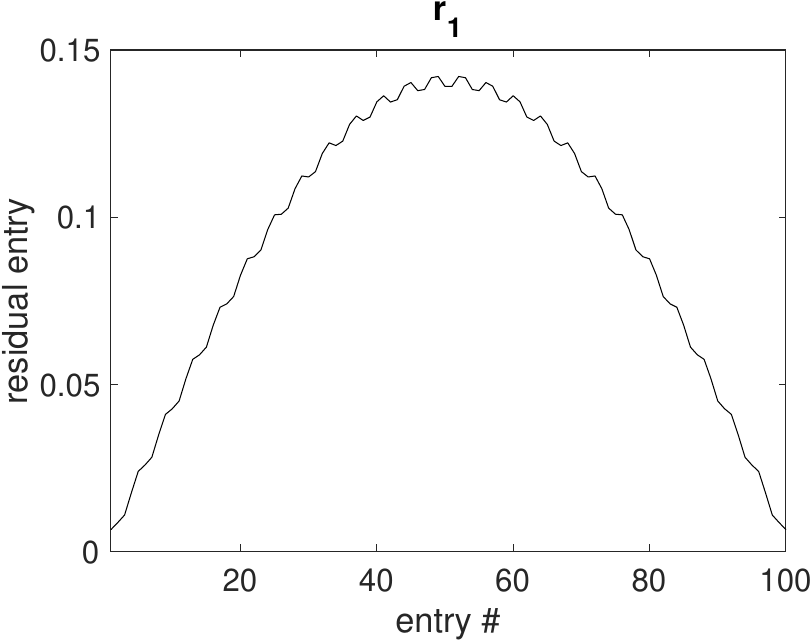} \\[1.5ex]
\includegraphics[width=0.45\linewidth]{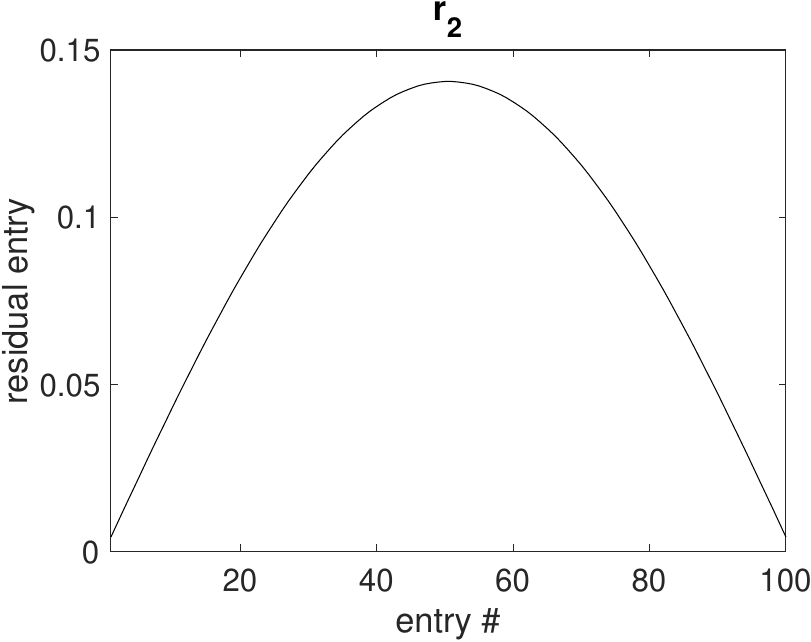} &
\includegraphics[width=0.45\linewidth]{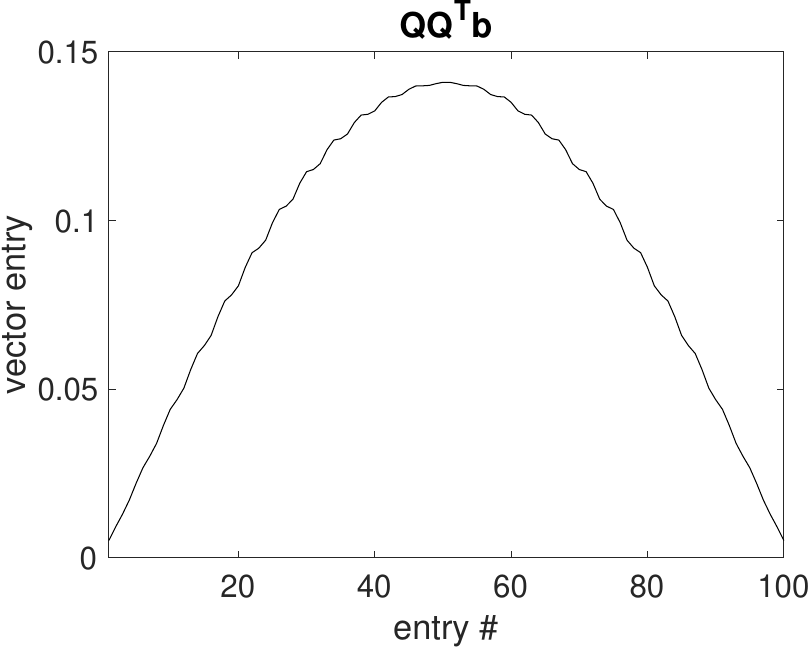}
\end{tabular}
\end{center}
\caption{\added{The smoothing properties of conventional iterative schemes
for solving linear systems $A x = b$.
$A$ is a 1D discretized Laplacian with homogeneous Dirichlet boundary conditions in the
space domain $[0,1]$. Standard second order three-point finite differences
are used on a uniform grid of $N=100$ nodes.
Zero initial guess vector $x_0=0$ is taken, so that $r_0=b$.
Top left: entries of the right-hand side vector $b$ being a sum of two
normalized eigenvectors of $A$, corresponding to the smallest in magnitude
and 51st smallest in magnitude eigenvalues.  Top right and bottom left: entries
of the residual vector at iterations~1 and~2.  Bottom right: restriction
$Q^T$
followed by prolongation $Q$ as a smoother, with a uniform coarse grid of $n=50$ nodes.}}
\label{f:smooth}
\end{figure}

If $x_m\in\Rr^N$ is an approximate solution to system~\eqref{ls}, an iteration
update $x_m\rightarrow x_{m+1}$ in the two-grid method can be carried out as 
shown in Figure~\ref{f:2G}.  The key idea here is that, since the residual $\bar{r}_m$
of $\bar{x}_m$ is smoothed at Step~1, it can be well represented on the coarse mesh
by $\tilde{r}_m$.
Hence, the corresponding correction vector $A^{-1}\bar{r}_m$
can hopefully be well approximated by the interpolated
coarse grid correction $Q\tilde{z}_m=Q\tilde{A}^{-1}\tilde{r}_m$ (Step~2).
Replacing $A^{-1}\bar{r}_m$ by $Q\tilde{A}^{-1}\tilde{r}_m$ is not only
computationally cheaper but also, if the coarse grid solution is done
iteratively, has a potential to efficiently eliminate the lower frequency modes
in the residual.
Indeed, these modes become higher frequency modes on the coarse grid and, hence,
may get within the reach of the smoothing effect.  
Note that, within the algorithmic construction in Figure~\ref{f:2G},
the smoothing steps~1 and~3 are essential because 
the coarse grid correction alone will not lead to a
converging iteration, see, e.g., \cite[Section~10.1.5]{HackbuschBookIter}
or \cite[Section~2.2.3]{Trottenberg_ea2001}.

\begin{figure}
\centerline{\begin{tabular}{rl}
\multicolumn{2}{l}{$x_{m+1} := $ \texttt{iteration2G} ($x_m$, $A\in\RrNN$, 
$\tilde{A}\in\Rr^{n\times n}$, $b$)}\\
\multicolumn{2}{l}{For given linear system $Ax=b$, $A\in\RrNN$, 
$\tilde{A}\in\Rrnn$ (a coarse grid analogue of $A$),}\\
\multicolumn{2}{l}{$x_m\in\Rr^N$, 
carries out an iteration $x_m\rightarrow x_{m+1}$ of the two-grid method}
\\[1.5ex]
1.  & Smoothing: starting with $x_m$, carry out a number of iterations~\eqref{it}.\\
    & Store the result as $\bar{x}_m$.
\\[1.5ex]
2.  & Coarse grid correction:\\
    & Restrict the residual $\bar{r}_m = b - A\bar{x}_m$ to the coarse grid,
      $\tilde{r}_m:=Q^T\bar{r}_m\in\Rr^n$.\\
    & Find the coarse grid correction $\tilde{z}_m$ by solving $\tilde{A}\tilde{z}_m=\tilde{r}_m$.\\
    & Prolong the correction to the fine grid $z_m := Q \tilde{z}_m$.
\\[1.5ex]
3.  & Smoothing: starting with $x_m+z_m$, carry out a number of iterations~\eqref{it}.\\
    & Store the result as $x_{m+1}$.
\end{tabular}}
\caption{A two-grid method iteration for solving linear system $Ax=b$}
\label{f:2G}  
\end{figure}

\section{Problem setting \added{and baseline methods}}
\added{Having discussed some basic concepts of multigrid for solving
linear systems, we are now ready to turn to time-dependent problems
and related matrix functions.}
Unless reported otherwise, \replaced{in this}{through out the} paper
\added{$(\,\cdot\,,\,\cdot\,)$ denotes the Euclidean
inner product and}
$\|\cdot\|$ denotes the Euclidean vector or the corresponding operator norm.
For given $A\in\RrNN$, $v,g\in\Rr^N$, and $T>0$ we are interested in solving
initial-value problem (IVP)
\begin{equation}
\label{ivp}
y'(t) = - Ay(t)+g, \quad y(0)=v, \quad t\in[0,T].
\end{equation}
Through out this paper 
we assume that the symmetric part $\frac12(A+A^T)$
of $A$ is a positive semidefinite matrix, i.e.,
there exists a constant $\omega\geqs 0$ such that
\begin{equation}
\label{omega}
\dfrac{(\frac12(A+A^T)x,x)}{(x,x)}\geqs\omega\geqs 0, \quad
\text{for all}\quad x\in\Rr^N, x\ne 0.
\end{equation}
We also assume that relation~\eqref{omega}, with a different constant
$\tilde{\omega}\added{\geqs}0$, 
holds for \replaced{the }{a} coarse grid analogue \added{$\tilde{A}$} of $A$. 
\deleted{denoted by $\tilde{A}\in\Rr^{n\times n}$, $n<N$.}
Furthermore, we assume that $v$ and $g$ are not simultaneously zero.
It is easy to check that
\begin{equation}
\label{yt}
y(t) = v + t\varphi(-tA)(g-Av),\quad t\geqs 0,
\end{equation}
where $\varphi(-tA)$ is a matrix function with $\varphi$ defined as
\begin{equation}
\label{phi}
\varphi(z)\equiv \frac{e^z-1}{z}, \quad z\in\Cc\setminus\{0\},
\qquad \varphi(0) \equiv 1. 
\end{equation}
Note that for $g=0$ relation~\eqref{yt} takes a form
\begin{equation}
\tag{\ref{yt}$'$}
y(t) = \exp(-tA)v,\quad t\geqs 0,
\end{equation}
where $\exp(A)$ is the matrix exponential.

\subsection{\added{Krylov subspace methods}}
\label{s:Krylov}
Assume we solve IVP~\eqref{ivp} by computing $y(t)$ in~\eqref{yt} 
by the regular (polynomial) Krylov subspace method
(see, e.g.,~\cite{GolVanL,SaadBook2003,Henk:book})
and obtain, after $k$ steps 
of the Arnoldi (or Lanczos) process, an approximate Krylov subspace solution $y_k(t)$.
This means that matrices $V_{k+1}\in\Rr^{N\times(k+1)}$ and
$H_{k+1,k}\in\Rr^{(k+1)\times k}$ are constructed such that the columns
$v_1$, \dots, $v_{k+1}$ of $V_{k+1}$ are orthonormal and span the Krylov subspace,
with
\begin{equation}
\label{beta_v1}
v_1 = \frac1{\beta}\bar{g}, \quad \beta=\|\bar{g}\|, \quad \bar{g}=g-Av.  
\end{equation}
The matrix $H_{k+1,k}$ is upper Hessenberg and it holds
\begin{equation}
\label{Arn}
AV_k = V_{k+1} H_{k+1,k},
\end{equation}
where the right hand side can be rewritten as
$V_{k+1} H_{k+1,k}= V_kH_{k,k} + h_{k+1,k}v_{k+1}e_k^T$, with $H_{k,k}$ being the leading principal
$k\times k$ submatrix of $H_{k+1,k}$, $h_{k+1,k}$ being the $(k+1,k)$ entry of $H_{k+1,k}$
and $e_k=(0,\dots,0,1)^T\in\Rr^k$.
Relation~\eqref{Arn} is usually refered to as Arnoldi decomposition.
The Krylov subspace solution $y_k(t)$, solving~\eqref{ivp} approximately,
then reads
\begin{equation}
\label{yk}  
y_k(t) = v + V_ku(t), \quad u(t)= t\varphi(-tH_{k,k})\beta e_1.
\end{equation}
Approximation quality of $y_k(t)$ can be determined by measuring
the exponential residual~\cite{CelledoniMoret97,DruskinGreenbaumKnizhnerman98,BGH13}
$$
r_k(t)\equiv -Ay_k(t)-y_k'(t)+g,
$$
which is readily available in the course of the Krylov subspace
iterations.  Indeed, it is easy to check that~\cite{BKT21}
\begin{equation}
\label{rk}
r_k(t)= -h_{k+1,k}v_{k+1}e_k^Tu(t), 
\end{equation}
where $u(t)$ is introduced in~\eqref{yk}.

If it is impossible or inefficient to store and handle more than $k+1$
Krylov subspace vectors, one can restart the Krylov subspace method
after $k$ steps~\cite{EiermannErnst06,TalEzer2007,Afanasjew_ea08,PhD_Guettel,%
Eiermann_ea2011}.
This can be done, for instance, as follows.
Denoting by $\yex(t)$ the exact solution of~\eqref{ivp}, we see that
the error $\eps_k(t)=\yex(t)-y_k(t)$ satisfies
\begin{equation}
\label{ivp0}
\eps_k'(t) = - A\eps_k(t)+r_k(t), \quad \eps_k(0)=0, \quad t\in[0,T].
\end{equation}
Solving~\eqref{ivp0} with another $k$ Krylov subspace iterations
we obtain its approximate solution $\tilde{\eps}_k(t)\approx\eps_k(t)$
and update
\begin{equation}
\label{upd}
y_{2k}(t)=y_k(t) + \tilde{\eps}_k(t). 
\end{equation}
This \emph{residual restarting} procedure is proposed and used 
in~\cite{CelledoniMoret97,DruskinGreenbaumKnizhnerman98,BGH13}.
It is not difficult to check that the residual of $y_{2k}(t)$ is then
again a scalar time-dependent function times a constant vector.

Another restarting procedure is based on the observation
that $\|r_k(t)\|$ is a monotonically increasing function of $t$.
Hence, for any tolerance $\tol$ it is possible to find a $\delta>0$
such that $\|r_k(s)\|\leqs\tol$ for all $s\in[0,\delta]$.  We
can then compute $y_k(\delta)$ and restart by setting
in~\eqref{ivp} $v:=y_k(\delta)$ and shortening the time interval
$t:=t-\delta$.  This is called
residual-time (RT) restarting~\cite{ART,BKT21}.
For other restarting techniques we refer 
to~\cite{FrommerGuettelSchweitzer2014a,FrommerGuettelSchweitzer2014b,JaweckiAuzingerKoch2018}.

From~\eqref{ivp0} we see that the residual $r_k(t)$ can be regarded
as a backward error of the approximate solution $y_k(t)$.
IVP~\eqref{ivp0} also allows
to obtain the following error estimate.

\begin{lemma}
\emph{\cite{BGH13}}
Let $A\in\Rr^{N\times N}$ satisfy~\eqref{omega} and let $\yex(t)$ be
the exact solution of~\eqref{ivp}.  If $r_k(t)$ is the residual
of an approximate solution $y_k(t)\approx\yex(t)$ then
the error $\eps_k(t)=\yex(t)-y_k(t)$ can bounded in norm for
any $t\geqs 0$ as
\begin{equation}
\label{err2res}
\|\eps_k(t)\|\leqs t\varphi(-t\omega)\max_{s\in[0,t]}\|r_k(s)\|.
\end{equation}
\end{lemma}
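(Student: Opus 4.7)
The plan is to solve the IVP~\eqref{ivp0} for $\eps_k(t)$ by the variation-of-constants (Duhamel) formula and then bound the resulting integral. Since $\eps_k(0)=0$, one has
\begin{equation*}
\eps_k(t) = \int_0^t e^{-(t-s)A}\,r_k(s)\,ds,
\end{equation*}
so taking norms gives
\begin{equation*}
\|\eps_k(t)\| \leqs \int_0^t \|e^{-(t-s)A}\|\,\|r_k(s)\|\,ds \leqs \Bigl(\max_{s\in[0,t]}\|r_k(s)\|\Bigr)\int_0^t \|e^{-(t-s)A}\|\,ds.
\end{equation*}

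The key auxiliary fact I need is the bound $\|e^{-\tau A}\|\leqs e^{-\tau\omega}$ for all $\tau\geqs 0$, which follows from~\eqref{omega}. The standard way to show this is to set $w(\tau)=e^{-\tau A}x$ for an arbitrary $x\in\Rr^N$ and differentiate:
\begin{equation*}
\frac{d}{d\tau}(w,w) = -2(Aw,w) = -\bigl((A+A^T)w,w\bigr) \leqs -2\omega\,(w,w),
\end{equation*}
so Gr\"onwall's inequality yields $\|w(\tau)\|^2\leqs e^{-2\tau\omega}\|x\|^2$, i.e.\ $\|e^{-\tau A}x\|\leqs e^{-\tau\omega}\|x\|$.

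Using this bound inside the integral and substituting $\tau=t-s$ gives
\begin{equation*}
\int_0^t \|e^{-(t-s)A}\|\,ds \leqs \int_0^t e^{-(t-s)\omega}\,ds = \frac{1-e^{-t\omega}}{\omega} = t\,\varphi(-t\omega),
\end{equation*}
where the last equality uses definition~\eqref{phi} of $\varphi$. Combining these steps delivers~\eqref{err2res}. The case $\omega=0$ is covered by continuity (or by noting that $\varphi(0)=1$, so the bound reduces to $t\max_{s\in[0,t]}\|r_k(s)\|$, consistent with integrating a bound $\|e^{-\tau A}\|\leqs 1$). The main (mild) obstacle is the logarithmic-norm estimate on $\|e^{-\tau A}\|$; everything else is a direct Duhamel computation.
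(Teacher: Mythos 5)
The paper states this lemma without proof, simply citing \cite{BGH13}, and your argument --- variation of constants for \eqref{ivp0} followed by the logarithmic-norm bound $\|e^{-\tau A}\|\leqs e^{-\tau\omega}$ derived from \eqref{omega} and the integration $\int_0^t e^{-(t-s)\omega}\,ds = t\varphi(-t\omega)$ --- is exactly the standard proof given in that reference. Your proof is correct, including the $\omega=0$ case.
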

Note that for any $\omega\geqs 0$ and any $t\geqs 0$ we have
\begin{equation}
\label{phi_bound}
t\varphi(-t\omega) =
\begin{cases}
  \frac{1 - e^{-t\omega}}{\omega}\leqs \min\{ t, \frac1\omega \}
                               \leqs t, &\quad \text{for}\; \omega>0,
  \\
  t, &\quad \text{for}\; \omega=0.
\end{cases}
\end{equation}

\subsection{\added{Chebyshev polynomial expansion}}
Another important class of iterative methods for computing actions of
the matrix exponential and related functions are methods based on
Chebyshev polynomial expansion (see, e.g.,~\cite{TalEzer89},
\cite[Section~3.2.3]{RyabenkiiTsynkov}).
These methods are usually applied for symmetric
and skew-symmetric matrices.  
If the matrix $tA$ is transformed in such a way
that its eigenvalues lie in the interval $[-1, 1]$, the 
Chebyshev polynomial expansion reads
\begin{equation}
\label{Cheb}
y(t)=\exp(-tA)v\approx y_k(t) = P_k(-tA)v = 
\left[ \sum_{j=1}^k c_jT_j(-tA) +\frac{c_0}2I\right]v.   
\end{equation}
This expansion can be computed recursively by the Clenshaw algorithm~\cite{Clen62}.
In~\cite{BGH13} we have modified the algorithm in such a way that recursions
for the residual $r_k(t)=-Ay_k(t)-y'_k(t)$ are carried out as well.
Then, the iterative process can be stopped as soon as the residual norm
is small enough, see~\cite[Section~3]{BGH13} for details.
Just as the Krylov subspace iterations,
this iterative procedure with a residual-based stopping criterion
can be readily applied in combination with our coarse grid 
correction approach which we present in the next section.
Note that since our Chebyshev algorithm computes the action of the 
matrix exponential, it can be employed to solve~\eqref{ivp}
for $g=0$, cf.~(\ref{yt}$'$).  

\section{Coarse grid corrections}
\label{s:CGC}
\subsection{Coarse grid corrections, a two-grid version}
We now describe our coarse grid correction (CGC) algorithm.
We assume that $A$ stems from a PDE operator discretization
on a certain grid and that a coarser grid exists with $\tilde{A}$
being the coarse grid counterpart of $A$.  
%Let $Q$ and $Q^T$ be the matrices of respectively prolongation and
%restriction operators between the two grids,
%which for simplicity of presentation are assumed to be linear.
\added{Recall that $Q$ and $Q^T$ are the matrices of respectively prolongation and
restriction operators between the two grids, assumed to be linear
for simplicity of presentation.}
The sought after solution $y(t)$ can be computed as an action of the
$\varphi$ matrix function according to formula~\eqref{yt}, i.e.,
$$
y(t) := v + t\varphi(-tA)\bar{g}, 
$$
where we denote $\bar{g}=g-Av$.
Our approach is based on splitting the vector $\bar{g}$ into a part
which can be well represented on the coarse grid, namely $QQ^T\bar{g}$,
and the remaining part $\hat{g}=\bar{g} - QQ^T\bar{g}$.
The $\varphi$ matrix function is then evaluated separately on
$QQ^T\bar{g}$ and on $\hat{g}$. 
Since $QQ^T\bar{g}$ is a smooth vector we can hope that
the matrix function action $t\varphi(-tA)QQ^T\bar{g}$ can be replaced
by its extrapolated coarse grid analogue $tQ\varphi(-t\tilde{A})Q^T\bar{g}$.
The remaining non-smooth component $\hat{g}$ is then
handled on the original fine grid.  If $\|\hat{g}\|/\|\bar{g}\|$ is small
then the action $t\varphi(-tA)\hat{g}$ can be evaluated with a relaxed
tolerance.

An algorithmic description of our CGC algorithm
is presented in Figure~\ref{f:alg_2g}.
There, the algorithms computing the $\varphi$ action at steps~1 and~2
are supposed to produce approximate solutions $\tilde{y}(t)$ and $\hat{y}(t)$
such that their residuals
\begin{equation}
\label{resids}
\tilde{r}(t)\equiv -\tilde{A}\tilde{y}(t)-\tilde{y}'(t)+\tilde{g},
\quad
\hat{r}(t)\equiv -A\hat{y}(t)-\hat{y}'(t)+\hat{g}  
\end{equation}
satisfy, respectively, 
\begin{equation}
\label{stop}
\max_{s\in[0,t]}\|\tilde{r}(s)\|\leqs\tilde{\beta}\,\widetilde{\tol},
\quad
\max_{s\in[0,t]}\|\hat{r}(s)\|\leqs\hat{\beta}\,\widehat{\tol}.
\end{equation}

\begin{figure}
\centerline{\begin{tabular}{rl}
  \multicolumn{2}{l}{$\ymg(t) := $ \texttt{CGC2G} ($A$, $v$, $g$, $t$, $\tol$)}\\
  \multicolumn{2}{l}{For given $A\in\Rr^{N\times N}$, $v,g\in\Rr^N$,
    $t>0$ and tolerance $\tol$, the algorithm computes}\\
  \multicolumn{2}{l}{$\ymg(t):\approx v + t\varphi(-t A)(g-Av)$ whose
    error is bounded as shown in Proposition~\ref{err_est}.}
  \\[1.5ex]
  0. & Form a coarse grid analogue $\tilde{A}\in\Rr^{n\times n}$ of $A$,
       set $\bar{g}=g-Av$.\\
     & Split $\bar{g}$: $\tilde{g}:=Q^T\bar{g}$, $\hat{g} := \bar{g}-QQ^T\bar{g}$,
    $\beta:=\|\bar{g}\|$, $\tilde{\beta}:=\|\tilde{g}\|$,
    $\hat{\beta}:=\|\hat{g}\|$.
    \\[1.5ex]
  1. & Compute 
  $\tilde{y}(t):\approx t\varphi(-t\tilde{A})\tilde{g}$ with tolerance
  $\widetilde{\tol}:=(\beta/\tilde{\beta})\,\tol$ (coarse grid).
  \\[1.5ex]
  2. & Compute 
  $\hat{y}(t):\approx t\varphi(-t A)\hat{g}$ with tolerance
  $\widehat{\tol}:=(\beta/\hat{\beta})\,\tol$ (fine grid).
  \\[1.5ex]
  3. & Form the sought after approximate solution
       $\ymg(t) := v + \hat{y}(t) + Q\tilde{y}(t)$.
\end{tabular}}
\caption{A two-grid version of our CGC (coarse grid correction) algorithm}
\label{f:alg_2g}
\end{figure}

A proposition below reveals the structure of the error
of the CGC algorithm.  As we will see, the error contains a term
which can not be made arbitrarily small by using a stringent tolerance.
The accuracy of the method is restricted and, as expected, depends on how well
the smooth part of the solution can be approximated by the extrapolated
coarse grid solution. 

\begin{proposition}
\label{err_est}
Let $A\in\Rr^{N\times N}$ and its coarse grid analogue $\tilde{A}\in\Rr^{n\times n}$
satisfy relation~\eqref{omega} and let $\yex(t)$ be the exact solution of~\eqref{ivp}.
Then for solution $\ymg(t)$ of the two-grid CGC algorithm
(see Figure~\ref{f:alg_2g}) holds, for any $t\geqs 0$,
\begin{align}
\label{err_est1}
\|\yex(t)-\ymg(t)\| &\leqs
t\left\|(\varphi(-tA)Q-Q\varphi(-t\tilde{A}))\tilde{g}\right\| +
t\varphi(-t\bar{\omega})(\|Q\|+1)\beta\,\tol
\\
\label{err_est2}
&\leqs
\left(t\varphi(-t\bar{\omega})\right)^2
\|Q\tilde{A} - A Q\|\, \|\tilde{g}\| +
t\varphi(-t\bar{\omega})(\|Q\|+1)\beta\,\tol,
\end{align}
where $\bar{\omega}=\min\{\omega,\tilde{\omega}\}$.
\end{proposition}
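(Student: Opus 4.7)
The plan is to decompose the total error into three contributions and bound each separately. Since $\bar{g}=\hat{g}+QQ^T\bar{g}=\hat{g}+Q\tilde{g}$, the exact solution splits as
$\yex(t)=v+t\varphi(-tA)\hat{g}+t\varphi(-tA)Q\tilde{g}$,
and subtracting $\ymg(t)=v+\hat{y}(t)+Q\tilde{y}(t)$ and adding/subtracting $tQ\varphi(-t\tilde{A})\tilde{g}$ gives
\begin{equation*}
\yex(t)-\ymg(t) = \bigl[t\varphi(-tA)\hat{g}-\hat{y}(t)\bigr] + Q\bigl[t\varphi(-t\tilde{A})\tilde{g}-\tilde{y}(t)\bigr] + t\bigl[\varphi(-tA)Q-Q\varphi(-t\tilde{A})\bigr]\tilde{g}.
\end{equation*}
The first two terms are the fine- and coarse-grid solver errors, while the last is the genuine coarse-grid correction error that cannot be driven to zero by tightening the tolerances.

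Next I would bound the two solver errors by applying the earlier lemma (the residual-based estimate \eqref{err2res}) together with the stopping criteria \eqref{stop} and the choice of $\widetilde{\tol}$, $\widehat{\tol}$ from Figure~\ref{f:alg_2g}. This gives
$\|t\varphi(-tA)\hat{g}-\hat{y}(t)\|\leqs t\varphi(-t\omega)\hat{\beta}\widehat{\tol}=t\varphi(-t\omega)\beta\,\tol$
and
$\|t\varphi(-t\tilde{A})\tilde{g}-\tilde{y}(t)\|\leqs t\varphi(-t\tilde{\omega})\tilde{\beta}\widetilde{\tol}=t\varphi(-t\tilde{\omega})\beta\,\tol$.
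Using monotonicity of $\varphi(-t\,\cdot)$ together with $\omega,\tilde{\omega}\geqs\bar{\omega}$ to replace both $\omega$ and $\tilde{\omega}$ by $\bar{\omega}$, and the triangle inequality $\|Q[\cdot]\|\leqs\|Q\|\,\|\cdot\|$, produces the second summand $t\varphi(-t\bar{\omega})(\|Q\|+1)\beta\,\tol$ appearing in both \eqref{err_est1} and \eqref{err_est2}.

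The main obstacle, and the interesting step, is to bound the commutator term $w(t):=t\bigl[\varphi(-tA)Q-Q\varphi(-t\tilde{A})\bigr]\tilde{g}$ in the form appearing in \eqref{err_est2}. My plan is to view $w(t)=u(t)-Q\tilde{u}(t)$ with $u(t)=t\varphi(-tA)Q\tilde{g}$ and $\tilde{u}(t)=t\varphi(-t\tilde{A})\tilde{g}$, which satisfy
\begin{equation*}
u'(t)=-Au(t)+Q\tilde{g},\qquad \tilde{u}'(t)=-\tilde{A}\tilde{u}(t)+\tilde{g},\qquad u(0)=0,\ \tilde{u}(0)=0.
\end{equation*}
A direct computation then shows that $w$ itself solves the IVP
\begin{equation*}
w'(t)=-Aw(t)+(Q\tilde{A}-AQ)\tilde{u}(t),\qquad w(0)=0,
\end{equation*}
so the inhomogeneity $(Q\tilde{A}-AQ)\tilde{u}(t)$ plays the role of a residual for $w$. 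Applying the lemma once more yields
$\|w(t)\|\leqs t\varphi(-t\omega)\max_{s\in[0,t]}\|(Q\tilde{A}-AQ)\tilde{u}(s)\|$, and since $\|\tilde{u}(s)\|=\|s\varphi(-s\tilde{A})\tilde{g}\|\leqs s\varphi(-s\tilde{\omega})\|\tilde{g}\|$ is monotonically increasing in $s$ (by \eqref{phi_bound}), its maximum is attained at $s=t$. Passing from $\omega$ and $\tilde{\omega}$ to $\bar{\omega}$ yields
$\|w(t)\|\leqs(t\varphi(-t\bar{\omega}))^2\|Q\tilde{A}-AQ\|\,\|\tilde{g}\|$, which combined with the earlier solver-error bound completes \eqref{err_est2}. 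The bound \eqref{err_est1} follows by not expanding $\|w(t)\|$ further and simply writing $\|w(t)\|\leqs t\|(\varphi(-tA)Q-Q\varphi(-t\tilde{A}))\tilde{g}\|$ directly.
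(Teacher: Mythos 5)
Your proposal is correct and follows essentially the same route as the paper: the same three-term decomposition (fine-grid solver error, prolonged coarse-grid solver error, commutator term), the same use of the residual-to-error lemma \eqref{err2res} with the tolerance choices of Figure~\ref{f:alg_2g}, and the same key observation that the commutator term solves an IVP whose inhomogeneity is $(Q\tilde{A}-AQ)\tilde{u}(t)$, which the paper phrases equivalently as $Q\ytex(t)$ being the exact solution of a problem perturbed by the residual $\rcg(t)=(Q\tilde{A}-AQ)\ytex(t)$. Your writing of the IVP for $w(t)$ explicitly is, if anything, a slightly cleaner presentation of the same argument.
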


\begin{proof}
For simplicity assume, without loss of generality, that $v=0$ in~\eqref{ivp}
and $g\ne 0$.
Let $S_t\equiv t\varphi(-t A)$ and $\tilde{S}_t\equiv t\varphi(-t\tilde{A})$, where
the subscripts indicate that the matrices $S_t$ and $\tilde{S}_t$
depend on the parameter $t$.
We have $g = QQ^Tg + (I-QQ^T)g = Q\tilde{g} + \hat{g}$ and, hence, 
we can split the exact solution $\yex(t)$ of~\eqref{ivp} as 
$$
\yex(t) = S_t g = S_t Q\tilde{g} + S_t\hat{g}.
$$
Since an approximate solution is the exact solution of a problem perturbed by
its residual, we can write 
$$
\ymg(t) = Q\tilde{y}(t) + \hat{y}(t) = Q\tilde{S}_t(\tilde{g}-\tilde{r}(t))
+ S_t(\hat{g}-\hat{r}(t)),
$$
where $\tilde{r}(t)$ and $\hat{r}(t)$ are the residuals of the
approximate solutions 
$\tilde{y}(t)$ and $\hat{y}(t)$, respectively, see~\eqref{resids}.
We then can estimate
\begin{equation}
\label{err_est1a}
\begin{aligned}
 \|\yex(t)-\ymg(t)\| &=\|S_t Q\tilde{g} + S_t\hat{g} - Q\tilde{S}_t(\tilde{g}-\tilde{r}(t)) -S_t(\hat{g}-\hat{r}(t))\|
  \\
&= 
  \|S_t Q\tilde{g} - Q\tilde{S}_t\tilde{g} + Q\tilde{S}_t\tilde{r}(t) + S_t\hat{r}(t)\|
  \\
&\leqs 
  \|S_t Q\tilde{g} - Q\tilde{S}_t\tilde{g}\| + \|Q\tilde{S}_t\tilde{r}(t)\| + \|S_t\hat{r}(t)\|
\\
&\leqs
\|S_t Q\tilde{g} - Q\tilde{S}_t\tilde{g}\| + 
t\varphi(-t\tilde{\omega})\,\|Q\|\max_{s\in[0,t]}\|\tilde{r}(s)\|
\\
&
\phantom{\leqs\|S_t Q\tilde{g} - Q\tilde{S}_t\tilde{g}\|} +
t\varphi(-t\omega)\max_{s\in[0,t]}\|\hat{r}(s)\|
\\
&\leqs
\|S_t Q\tilde{g} - Q\tilde{S}_t\tilde{g}\| + 
t\varphi(-t\bar{\omega})\,(\|Q\|\tilde{\beta}\,\widetilde{\tol} + \hat{\beta}\,\widehat{\tol})
\\
&=
\|S_t Q\tilde{g} - Q\tilde{S}_t\tilde{g}\| + 
t\varphi(-t\bar{\omega})\,(\|Q\| + 1)\beta\,\tol,
\end{aligned}
\end{equation}
where we take into account the residual-based error estimate~\eqref{err2res},
the stopping criteria~\eqref{stop} and the choice of the tolerances $\widetilde{\tol}$ and
$\widehat{\tol}$ in the two-grid CGC algorithm (see Figure~\ref{f:alg_2g}).
The last inequality is the error estimate~\eqref{err_est1}, the first one
of the two to be proved. 

Note that $\tilde{S}_t\tilde{g}$, appearing in the obtained estimate,
is the exact solution of the coarse grid IVP
$\tilde{y}'(t)=-\tilde{A}\tilde{y}(t) + \tilde{g}$, $\tilde{y}(0)=0$,
and denote $\ytex(t)\equiv\tilde{S}_t\tilde{g}$.
Then the prolonged exact coarse grid solution $Q\tilde{S}_t\tilde{g}=Q\ytex(t)$ can be seen
as the exact solution $S_t(Q\tilde{g}-\rcg(t))$ of the perturbed IVP
$y'(t)=-Ay(t)+ Q\tilde{g}-\rcg(t)$, $y(0)=0$, with $\rcg(t)$ being
the residual of $Q\ytex(t)$ with respect to IVP
$$
y'(t)=-Ay(t)+ Q\tilde{g}, \quad y(0)=0.
$$
\added{Since $\ytex(t)$, by definition, satisfies $\ytex'(t)=-\tilde{A}\ytex(t)+\tilde{g}$,} 
we have for $t\geqs 0$
$$
\begin{aligned}
  \rcg(t) & \equiv -A Q\ytex(t) - Q\ytex'(t) + Q\tilde{g} =
  -A Q\ytex(t) - Q(-\tilde{A}\ytex(t)+\tilde{g}) + Q\tilde{g} =
  \\
  & = -A Q\ytex(t) + Q\tilde{A}\ytex(t) = (Q\tilde{A} - A Q)\ytex(t)
  = (Q\tilde{A} - A Q)\tilde{S}_t\tilde{g},
\end{aligned}
$$
so that
\begin{equation}
\label{est_last}  
\begin{aligned}
\|S_t Q\tilde{g} - Q\tilde{S}_t\tilde{g}\|
&=\|S_t Q\tilde{g} - S_t(Q\tilde{g}-\rcg(t))\|
=\| S_t \rcg(t) \| 
\\
&\leqs t\varphi(-t\omega)\max_{s\in[0,t]}\|\rcg(s)\|
\leqs t\varphi(-t\omega)\|Q\tilde{A} - A Q\|\max_{s\in[0,t]}\|\tilde{S}_s\tilde{g}\|
\\
&\leqs t\varphi(-t\omega)\|Q\tilde{A} - A Q\| t\varphi(-t\tilde{\omega})\|\tilde{g}\|
\\
&\leqs (t\varphi(-t\bar{\omega}))^2\|Q\tilde{A} - A Q\| \|\tilde{g}\|,
\end{aligned}
\end{equation}
where relation~\eqref{err2res} is used to bound
$\| S_t \rcg(t) \|$ and $\|\tilde{S}_s\tilde{g}\|$.
Substituting the last estimate into~\eqref{err_est1a}
we obtain~\eqref{err_est2}.
\end{proof}

\begin{note}
In exponential time integrators~\cite{HochbruckOstermann2010} the matrix
exponential and the $\varphi$ matrix function are typically evaluated
for $t:=\Delta t$, the time step size.  In this case, as estimate~\eqref{err_est2} shows,
the coarse grid error term is second order in time, i.e.,
\begin{equation}
\label{order2}  
(\Delta t\varphi(-\Delta t\bar{\omega}))^2\|Q\tilde{A} - A Q\| \|\tilde{g}\| =
O(\Delta t)^2.
\end{equation}
This means that the CGC method can be attractive within the exponential
time integration framework.
\end{note}

Since the accuracy of the CGC algorithm is restricted, it is important
to be able to estimate the achievable accuracy in practice.
Based on Proposition~\ref{err_est} and relation~\eqref{est_last},
we can estimate the coarse grid error term as
\begin{equation}
\label{pract_est}
\begin{aligned}
t\|\varphi(-tA)Q\tilde{g}-Q\varphi(-t\tilde{A})\tilde{g}\|   
&\leqs t\varphi(-t\omega)\max_{s\in[0,t]}\|\rcg(s)\|
\\
&\approx t\varphi(-t\omega)\|\rcg(t)\| =
t\varphi(-t\omega)\|(Q\tilde{A}-AQ)\ytex(t)\|\\
&\approx t\varphi(-t\omega)\|(Q\tilde{A}-AQ)\tilde{y}(t)\|,
\end{aligned}
\end{equation}
which is an easily computable estimate. 
It is convenient to compute the estimate after
step~1 of the Algorithm (see Figure~\ref{f:alg_2g}),
as soon as $\tilde{y}(t)$ becomes available.
To estimate the value of $\omega$ one can use, if within
the Krylov subspace methods framework, the Ritz values
(i.e., the eigenvalues of the projected matrix $H_{k,k}$).

\subsection{CGC algorithm, a multigrid version}
If the grid size is large, to solve the coarse grid problem
$\tilde{y}(t):\approx t\varphi(-t\tilde{A})\tilde{g}$ at step~1,
we can again apply the coarse grid correction.  This results in
a recursive multigrid algorithm presented in Figure~\ref{f:alg_mg}.
The algorithm differs from the two-grid algorithm in Figure~\ref{f:alg_2g}
only in step~1.
Assume that the algorithm uses a sequence of $m$ grids numbered
such that grid~1 is the finest and grid~$m$ is the coarsest one.
If $Q_j$ is the linear prolongation operator from \replaced{grid}{mesh}~$j+1$ to grid~$j$
and $Q_j^T$ is the corresponding restriction operator
then the input vector $\tilde{g}_1:=\bar{g}$ is successively split as
\begin{equation}
\label{split_mg}
\begin{alignedat}{3}
  &\text{grid 1:} \quad &\tilde{g}_1 &= Q_1\tilde{g}_2 + \hat{g}_1, \quad&\text{with}\quad&
  \tilde{g}_2:=Q_1^T\bar{g},\;\hat{g}_1:=\bar{g}-Q_1\tilde{g}_2,
  \\
  &\text{grid 2:} \quad &\tilde{g}_2 &= Q_2\tilde{g}_3 + \hat{g}_2, \quad&\text{with}\quad&
  \tilde{g}_3:=Q_2^T\tilde{g}_2,\;\hat{g}_2:=\tilde{g}_2-Q_2\tilde{g}_3,
  \\
  &\dots  &  &  &  &
  \\
  &\text{grid $m-1$:} \quad &\tilde{g}_{m-1} &= Q_{m-1}\tilde{g}_m + \hat{g}_{m-1},
  \quad& \text{with}\quad& \tilde{g}_m:=Q_{m-1}^T\tilde{g}_{m-1},
  \\
  & & & & &\hat{g}_{m-1}:=\tilde{g}_{m-1}-Q_{m-1}\tilde{g}_m.  
\end{alignedat}
\end{equation}
Note that the $\varphi$ matrix functions are evaluated once at step~1 of the algorithm
on the coarsest grid~$m$ and $m-1$ times at step~2 on grids $1$, \dots, $m-1$.

\begin{figure}
  \centerline{\begin{tabular}{rl}
  \multicolumn{2}{l}{$\ymg(t) := $ \texttt{CGCMG} ($A$, $v$, $g$, $t$, $\tol$)}\\
  \multicolumn{2}{l}{For given $A\in\Rr^{N\times N}$, $v,g\in\Rr^N$,
    $t>0$ and tolerance $\tol$, the algorithm computes}\\
  \multicolumn{2}{l}{$\ymg(t):\approx v + t\varphi(-t A)(g-Av)$ whose
    error is bounded as shown in Proposition~\ref{err_est}.}
  \\[1.5ex]
  0. & Form a coarse grid analogue $\tilde{A}\in\Rr^{n\times n}$ of $A$,
       set $\bar{g}=g-Av$.\\
     & Split $\bar{g}$: $\tilde{g}:=Q^T\bar{g}$, $\hat{g} := \bar{g}-QQ^T\bar{g}$,
    $\beta:=\|\bar{g}\|$, $\tilde{\beta}:=\|\tilde{g}\|$,
    $\hat{\beta}:=\|\hat{g}\|$.
    \\[1.5ex]
  1. & If grid is coarse enough then\\
     & \hspace*{3em}compute 
       $\tilde{y}(t):\approx t\varphi(-t\tilde{A})\tilde{g}$ with tolerance
       $\widetilde{\tol}:=(\beta/\tilde{\beta})\,\tol$\\
     & else\\
     & \hspace*{3em}recursion: $\tilde{y}(t) := $
                               \texttt{CGCMG} ($\tilde{A}$, $v=0$, $\tilde{g}$,
                               $t$, $\widetilde{\tol}=(\beta/\tilde{\beta})\,\tol$).
  \\[1.5ex]
  2. & Compute 
  $\hat{y}(t):\approx t\varphi(-t A)\hat{g}$ with tolerance
  $\widehat{\tol}:=(\beta/\hat{\beta})\,\tol$ (fine grid).
  \\[1.5ex]
  3. & Form the sought after approximate solution
       $\ymg(t) := v + \hat{y}(t) + Q\tilde{y}(t)$.
\end{tabular}}
\caption{A multigrid version of the CGC algorithm}
\label{f:alg_mg}
\end{figure}

\begin{proposition}
\label{err_mg_pr}
Let the recursive multigrid CGC algorithm (see Figure~\ref{f:alg_mg})
be applied on a sequence of grids $j=1,\dots,m$ such that grid~$j+1$ is
coarser than grid~$j$ for all $j=1,\dots,m-1$.
Let $A_j\in\Rr^{n_j\times n_j}$, $j=1,\dots,m$, be discretizations of a certain PDE operator
on grid~$j$ which satisfy relation~\eqref{omega} with $\omega=\omega_j$
and let $\bar{\omega}=\min_{j=1,\dots,m}\omega_j$.
Furthermore, let $Q_j$, $j=1,\dots,m-1$, be linear prolongation operators
from \replaced{grid}{mesh}~$j+1$ to grid~$j$ and let $\yex(t)$ be the exact solution of~\eqref{ivp}.
If the tolerances in the $\varphi$ matrix function evaluations
(steps~1 and~2 of the algorithm) are chosen such that
\begin{equation}
\label{tols}
\max_{s\in[0,t]}\|\tilde{r}_m(s)\| \leqs \beta\,\tol,
\qquad
\max_{s\in[0,t]}\|\hat{r}_j(s)\| \leqs \beta\,\tol, \quad j=1,\dots,m-1,
\end{equation}
where $\beta=\|\bar{g}\|$, and $\tilde{r}_j(s)$, $\hat{r}_j(t)$ are the residuals of
the solvers employed respectively at steps~1 and~2 of the algorithm on grid~$j$, 
then for solution $\ymg(t)$ of the multigrid CGC algorithm holds, for any $t\geqs 0$,
\begin{equation}
\label{err_mg_eq}
\begin{aligned}
\|\yex(t)-\ymg(t)\| \leqs &\sum_{j=1}^{m-1}\prod_{i=1}^{j-1}\|Q_i\|\,
t \left\|\left(\varphi(-tA_j)Q_j - Q_j\varphi(-tA_{j+1})\right)\tilde{g}_{j+1}\right\|
\\
& + t \varphi(-t\bar{\omega})\beta\,\tol \sum_{j=1}^m\prod_{i=1}^{j-1}\|Q_i\| .
\end{aligned}
\end{equation}
\end{proposition}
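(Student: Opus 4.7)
The plan is to prove the estimate by downward induction on the grid level $j$, using the two-grid analysis of Proposition~\ref{err_est} as the template for the inductive step. Without loss of generality I would assume $v=0$, denote by $\ytex^{(j)}(t):=t\varphi(-tA_j)\tilde{g}_j$ the exact $\varphi$-action on grid~$j$ applied to the level-$j$ right-hand side from \eqref{split_mg}, and let $\ymg^{(j)}(t)$ denote the output of the recursive algorithm call issued on grid~$j$. Since $\tilde{g}_1=\bar{g}$ and $v=0$, one has $\ytex^{(1)}(t)-\ymg^{(1)}(t)=\yex(t)-\ymg(t)$. The inductive claim I would prove is the refined bound
\begin{align*}
\|\ytex^{(j)}(t)-\ymg^{(j)}(t)\|
&\leqs \sum_{k=j}^{m-1}\prod_{i=j}^{k-1}\|Q_i\|\,
t\bigl\|(\varphi(-tA_k)Q_k-Q_k\varphi(-tA_{k+1}))\tilde{g}_{k+1}\bigr\|\\
&\quad+t\varphi(-t\bar\omega)\beta\,\tol\sum_{k=j}^{m}\prod_{i=j}^{k-1}\|Q_i\|,
\end{align*}
which specializes to \eqref{err_mg_eq} at $j=1$.

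The base case $j=m$ is immediate: the algorithm performs a single direct $\varphi$-evaluation on grid~$m$, and the residual-based estimate \eqref{err2res} combined with \eqref{tols} gives $\|\ytex^{(m)}(t)-\ymg^{(m)}(t)\|\leqs t\varphi(-t\omega_m)\beta\,\tol\leqs t\varphi(-t\bar\omega)\beta\,\tol$, which matches the claimed bound at $j=m$ (both sums collapse to a single term with an empty product). For the inductive step from grid $j+1$ to grid $j$, I would mimic the calculation in \eqref{err_est1a}. The split $\tilde{g}_j=Q_j\tilde{g}_{j+1}+\hat{g}_j$ together with the algorithmic identity $\ymg^{(j)}(t)=\hat{y}_j(t)+Q_j\ymg^{(j+1)}(t)$ yields the three-term decomposition
\begin{align*}
\ytex^{(j)}(t)-\ymg^{(j)}(t)
&=\bigl(t\varphi(-tA_j)Q_j-Q_j\,t\varphi(-tA_{j+1})\bigr)\tilde{g}_{j+1}\\
&\quad+Q_j\bigl(\ytex^{(j+1)}(t)-\ymg^{(j+1)}(t)\bigr)
+\bigl(t\varphi(-tA_j)\hat{g}_j-\hat{y}_j(t)\bigr).
\end{align*}

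Each piece is bounded separately: the first (commutator) summand contributes the level-$j$ term of the first sum directly; the third is bounded by $t\varphi(-t\omega_j)\beta\,\tol\leqs t\varphi(-t\bar\omega)\beta\,\tol$ via \eqref{err2res} and \eqref{tols}; and the middle summand is at most $\|Q_j\|$ times the inductive estimate on grid $j+1$. Multiplying the inductive bound by $\|Q_j\|$ extends every product $\prod_{i=j+1}^{k-1}\|Q_i\|$ into $\prod_{i=j}^{k-1}\|Q_i\|$, after which the freshly added commutator and tolerance contributions (each carrying the trivial product $\prod_{i=j}^{j-1}=1$) extend the two sums so that they start at $k=j$, producing exactly the claim at level $j$. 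The only real obstacle is careful bookkeeping — tracking the empty-product convention and the index shifts so that the sums line up — as the analytic ingredients (the commutator identity, the residual-to-error bound \eqref{err2res}, and the monotonicity of $t\varphi(-t\cdot)$ that replaces each $\omega_j$ by $\bar\omega$) are already in place from the two-grid proof.
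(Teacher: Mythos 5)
Your proposal is correct and follows essentially the same route as the paper: the three-term decomposition in your inductive step is exactly the paper's recurrence for the level-$j$ error, bounded by the commutator term, $\|Q_j\|$ times the next-level error, and the residual term via \eqref{err2res} and \eqref{tols}. The only difference is organizational — you run a formal downward induction with base case the direct solve on grid~$m$, while the paper starts from the two-grid estimate at level $m-1$ and unrolls the same recurrence — and the bookkeeping of products and index shifts works out as you describe.
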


% Note:
% To satisfy~\eqrf{tols}, the tolerances in the solvers used within the recursive CGC
% algorithm might need to be taken more stringent than in the algorithm description.
% If the norm of $Q_j$ are all <= 1 then condition~\eqrf{tols} is satisfied without
% any adjustment.

\begin{proof}
Let $S_t^{(j)}$ denote the solution operator on grid $j$, i.e.,
$S_t^{(j)} \equiv t\varphi(-tA_j)$,
and let $G_t^{(j)} \equiv S_t^{(j)}Q_j - Q_jS_t^{(j+1)}$, $j=1,\dots,m-1$.
Furthermore, let $e_j(t)\equiv \yex^{(j)}(t)-\ymg^{(j)}(t)$ be the error triggered
by the multigrid CGC algorithm on grid~$j=1,\dots,m-1$, with
$$
\yex^{(j)}(t)\equiv S_t^{(j)}\tilde{g}_j,
$$
$\ymg^{(j)}(t)$ being the algorithm solution on grid~$j$
and $\tilde{g}_j$ is defined in~\eqref{split_mg}.
The algorithm solutions $\ymg^{(j)}(t)$ on grid~$j$,  $j=1,\dots,m-1$,
satisfy a recurrence relation
\begin{equation}
\label{y_rec}
\ymg^{(j)}(t) = Q_j\ymg^{(j+1)}(t) + S_t^{(j)}(\hat{g}_{j}-\hat{r}_j(t)),
\end{equation}
where $\hat{r}_j(t)$ is the residual of the solver at step~2 of the algorithm,
see Figure~\ref{f:alg_mg}.
Note that $e_{m-1}(t)$ can be estimated in the same way as the error of the
two-grid CGC algorithm (see proof of Proposition~\ref{err_est}).  Indeed,
$$
\begin{aligned}
  \yex^{(m-1)}(t) &= S_t^{(m-1)}Q_{m-1}\tilde{g}_m + S_t^{(m-1)}\hat{g}_{m-1},\\
  \ymg^{(m-1)}(t) &= Q_{m-1}S_t^{(m)}(\tilde{g}_m-\tilde{r}_m(t)) +
                   S_t^{(m-1)}(\hat{g}_{m-1}-\hat{r}_{m-1}(t)),
\end{aligned}
$$
where $\tilde{r}_m(t)$ and $\hat{r}_{m-1}(t)$ are respectively the residuals of the solvers
in step~1 (``then'' branch of the if statement) and step~2 of the algorithm.
Hence,
\begin{equation}
\label{emm}
\begin{aligned}  
  \|e_{m-1}(t)\| &\leqs \|S_t^{(m-1)}Q_{m-1}\tilde{g}_m - Q_{m-1}S_t^{(m)}\tilde{g}_m\| \\
  & \hspace*{20ex} + \|Q_{m-1}S_t^{(m)}\tilde{r}_m(t)\| +  \|S_t^{(m-1)}\hat{r}_{m-1}(t)\|
  \\
  &\leqs \|(S_t^{(m-1)}Q_{m-1} - Q_{m-1}S_t^{(m)})\tilde{g}_m\| \\
  & \hspace*{20ex} + t\varphi(-t\bar{\omega})(\|Q_{m-1}\|+1)\beta\,\tol.
\end{aligned}
\end{equation}
For the errors $e_j(t)$, $j=1,\dots,m-2$, we obtain, substituting
$\ymg^{(j+1)}(t)= \yex^{(j+1)}(t) - e_{j+1}(t)$ in recurrence~\eqref{y_rec},
$$
e_j(t) = S_t^{(j)}Q_j\tilde{g}_{j+1} + S_t^{(j)}\hat{g}_{j}
        - Q_j(S_t^{(j+1)}\tilde{g}_{j+1}  - e_{j+1}(t))
        - S_t^{(j)}(\hat{g}_j-\hat{r}_j(t)).
$$
Therefore
\begin{equation}
\label{est_rec}
\begin{aligned}
  \|e_j(t)\| &= \|(S_t^{(j)}Q_j - Q_jS_t^{(j+1)})\tilde{g}_{j+1} + Q_je_{j+1}(t)
  + S_t^{(j)}(\hat{g}_j\hat{r}_j(t)\|
  \\
  &\leqs \|G_t^{(j)}\tilde{g}_{j+1}\| + \|Q_j\|\,\|e_{j+1}(t)\| + \|S_t^{(j)}\hat{r}_j(t)\|
  \\
  &\leqs \|G_t^{(j)}\tilde{g}_{j+1}\| + \|Q_j\|\,\|e_{j+1}(t)\| +
  t\varphi(-t\bar{\omega})\beta\,\tol.
\end{aligned}
\end{equation}
Applying the last estimate recursively for $\|e_1(t)\|$, \dots, $\|e_{m-1}(t)\|$
and using relation~\eqref{emm}, we obtain~\eqref{err_mg_eq}.
\end{proof}

Proposition~\ref{err_mg_pr} shows that, provided the prolongation operators $\|Q_j\|$
are bound\-ed in norm, the error of the multigrid CGC algorithm is, roughly
speaking, a sum of the coarse grid correction errors
$t \left\|\left(\varphi(-tA_j)Q_j - Q_j\varphi(-tA_{j+1})\right)\tilde{g}_{j+1}\right\|$,
$j=1,\dots, m-1$.
Therefore, to evaluate the accuracy of the multigrid CGC algorithm in practice
the two-grid estimate~\eqref{pract_est} can be used successively, every time
a coarse grid correction is to be carried out.
The sum of these estimates computed by~\eqref{pract_est} then can be
seen as the error estimate for the multigrid CGC algorithm.

\subsection{\added{Towards a full multigrid cycle}}
A natural question arises whether our proposed coarse grid correction approach 
can be extended to a full
V or W multigrid cycle (cf.\ a two-grid iteration in Figure~\ref{f:2G}).
Unfortunately, this appears to be more difficult than might seem at first glance.
For instance, assume that after $k$ iterations of the Krylov subspace method~\eqref{yk}
an approximate solution $y_k(t)$ is obtained along with its residual $r_k(t)$, see~\eqref{rk}.
Then, a correction to solution $y_k(t)$ %as given by~\eqref{upd}
could have been obtained by solving IVP~\eqref{ivp0} on the coarser grid,
\begin{equation}
\label{ivp1}
\begin{aligned}
\text{restriction and CGC:}&\quad 
\tilde{\eps}_k'(t) = - \tilde{A}\tilde{\eps}_k(t)+Q^Tr_k(t), \quad \tilde{\eps}_k(0)=0,
\\
\text{prolongation and update:}&\quad
y_{2k}(t)=y_k(t) + Q\tilde{\eps}_k(t). 
\end{aligned}
\end{equation}
There are two reasons why this construction turns out to be not such a good idea.
First, the Krylov subspace vectors $v_k$ typically become less and less smooth 
with growing $k$, and, recalling that $r_{k}(t)\parallel v_{k+1}$, we see that
$k$ Krylov steps are actually a bad smoother.
Second, what is even more crucial, the residual $r_{2k}(t)$ of $y_{2k}(t)$ loses 
the compact form~\eqref{rk},
which makes the whole construction hardly practical.  Indeed, to get a usable
representation for the residual $r_{2k}(t)$
a special procedure would be needed, probably based on
a sophisticated resampling and parameterizing of $y_{2k}(t)$ and $r_{2k}(t)$.

As another possible building block for extending our CGC approach to a full
multigrid cycle, Richardson waveform relaxation iteration (see, 
e.g.,~\cite[Section~5.1]{BGH13})
could be considered.  Unfortunately, the same problem of obtaining the residual
in a compact usable form arises here as well.

\subsection{\added{Evaluation of the $\varphi_k$ matrix functions}}
The $\varphi_j$ matrix functions, defined as~\cite[formula~(2.10)]{HochbruckOstermann2010}
$$
\varphi_0(z)=\exp(z),\quad
\varphi_j(z)=\int_0^1 e^{(1-\theta)z}\frac{\theta^{j-1}}{(j-1)!}\,\mathrm{d}\theta,
\quad j\geqslant 1,
$$
are instrumental in exponential time integration~\cite{HochbruckOstermann2010}.
These functions satisfy the recurrence $\varphi_{j+1}(z)=(\varphi_j(z)-\varphi_j(0))/z$,
$j\geqslant 0$, 
and it is easy to see that $\varphi_1$ defined here coincides with the $\varphi$
function defined by~\eqref{phi}.  
Our CGC algorithm can be applied to evaluate actions of $\varphi_j(-tA)$ 
using the approach of~\cite[Thm.~1]{EXPOKIT} and~\cite[Thm.~2.1]{AlmohyHigham2011}.
Indeed, assume that for a certain $j$, $1\leqslant j \leqslant p$
and given vector $g\in\Rr^N$ we have to compute $\varphi_j(-tA)g$.
This approach allows to replace computing the action of $\varphi_j(-tA)$ by computing 
the matrix exponential action of a larger augmented 
$(N+p)\times(N+p)$ matrix
$$
\widehat{A} = 
\begin{bmatrix}
A & -W \\ 0 & -J  
\end{bmatrix}, \quad
J = 
\begin{bmatrix}
0 & I_{p-1} \\ 0 & 0   
\end{bmatrix}\in\Rr^{p\times p},
$$ 
where $I_{p-1}$ is the $(p-1)\times(p-1)$ identity matrix and 
the matrix $W\in\Rr^{N\times p}$ has the vector $g$ as its first column
and is zero elsewhere.
Then we have~\cite[page~491]{AlmohyHigham2011}
\begin{equation}
\label{aug}  
\varphi_j(-tA) g = \frac{1}{t^j}\left[\exp(-t\widehat{A})e_{N+j}\right]_{1:N},
\quad 1\leqslant j \leqslant p, \quad t>0,
\end{equation}
where $[x]_{1:N}$ denotes a vector of the first $N$ entries of $x$ 
and $e_{N+j}\in\Rr^{N+p}$ is the $(N+j)$th canonical basis vector. 
As relation~\eqref{aug} shows, we can use our CGC algorithm to
accelerate computing the action $\varphi_j(-tA) g$ by applying
it to the evaluation of $\exp(-t\widehat{A})e_{N+j}$.
We note that this approach has a drawback that a possible (skew) symmetry
of $A$ is lost in the augmented matrix $\widehat{A}$ and, if this is the case,
the Lanczos process should be replaced by the more expensive Arnoldi process.
Therefore, if only actions of the $\varphi=\varphi_1$ function are required
it is advisable to evaluate $\varphi$ directly, as discussed in 
Section~\ref{s:Krylov}, rather than via~\eqref{aug}.

\section{Numerical experiments}
\label{s:num_exp}
As \deleted{a} basic iterative solver\added{s} for evaluating the $\varphi$ function on each
of the grids we take the \phiRT{} method \added{and Chebyshev polynomial solver} described 
in~\cite{BKT21} and~\cite{BGH13}, respectively.
\replaced{The first solver}{It} is a Krylov subspace method based on a polynomial Lanczos process with
a residual-based stopping criterion (see~\eqref{resids},\eqref{stop})
and the residual-time (RT) restarting procedure discussed
above~\cite{ART,BKT21}.
\added{The second solver is a modification of the Clenshaw recursion~\cite{Clen62} 
with a built in residual control~\cite{BGH13}.}
For our CGC approach it is not crucial which particular solver is employed.
Nevertheless, it is convenient to use a solver with the residual-based
stopping criterion, as this fulfills the conditions of Propositions~\ref{err_est}
and~\ref{err_mg_pr}.
 
In all the tests the Krylov subspace dimension is set to~30, which means that
the restarting takes place every 30~Krylov steps.
The errors reported for all the tests are relative error norms
$$
\frac{\|\ymg(T)-\yref(T)\|}{\|\yref(T)\|},
$$
where $\yref(T)$ is a reference solution computed by the \texttt{phiv} solver
of the EXPOKIT package~\cite{EXPOKIT}.  % with the same tolerance.
% In our case it is not a problem that the reference solution is not
% computed with a more stringent tolerance since the errors due to the coarse
% corrections dominate anyway.
Note that the reference solution is computed on the same spatial
grid, so that the relative error measured in this way displays solely
the time error.
  
All the experiments are carried out in Matlab on a Linux desktop
computer with six 2.80GHz CPUs and 16~Gb memory.
To carry out restriction and prolongation operators in all
the tests the spline interpolation is used, available
in Matlab as the \texttt{interp1} and
\texttt{interpn} functions.
Similar, less accurate results are observed if the linear interpolation
is used.

\subsection{1D heat equation}
We now present numerical experiments for one-di\-men\-sion\-al heat equation
\begin{equation}
\label{test1d}
  u_t = u_{xx} + g(x), \quad u(x,0) = 1,
  \quad g(x)=e^{-500(x-0.5)^2},\quad x\in[0,1], 
\end{equation}
where periodic boundary conditions are imposed.
The standard spatial second-order finite difference discretization of this
initial-boundary-value problem on uniform mesh $x_i=i/(N+1)$, $i=1,\dots,N$,
yields~\eqref{ivp} with $A$ being a discretized second derivative operator
$\partial^2/\partial x^2$ with periodic boundary conditions.

The stopping criterion tolerance is set to $\tol=10^{-8}$.
We take the time interval length $T=0.01$, for which
$T\|A\|_1>42\,000$ on the $N=1024$ grid and 
$T\|A\|_1>165\,000$ on the $N=2048$ grid.

The results of the test runs are presented in Table~\ref{t:1d}.
The performance of the method is evaluated in terms of the number
of matrix-vector products (matvecs), the CPU time and the reached
accuracy.
The method indicated as ``1 grid method'' is the regular \phiRT{}
Krylov subspace method run on the given grid, with no coarse grid corrections. 
The error estimates given in brackets for the 2~grid method are
computed according to~\eqref{pract_est}, the error estimates
for 3~and 4~grid methods are the sums of the estimates~\eqref{pract_est}
computed at each grid coarsening.
First, we note that the practical error estimates~\eqref{pract_est},
reported in brackets in the second table column, are by no means
sharp.  This is to be expected as the estimates are obtained by rather
crude techniques.
We see that the CGC method clearly profits from \deleted{the} splitting the
source vector $\bar{g}$ in the smooth $Q\tilde{g}$ and non-smooth
$\hat{g}$ parts.
This happens due to the relaxed tolerance values reported
under the matvec values in brackets.

Furthermore, it is instructive to compare the results of the 2~grid method for $N=1024$
and of the 3~grid method for $N=2048$.
The matvec values 25 and 1219 for the first grid should be compared to the 
corresponding matvec values 6 and~1207 for the second one. 
We see that switching to a finer spatial grid in this case hardly leads to additional
costs.  This is because the eventual over-resolution in space is compensated
by the relaxed tolerance used on the finest mesh (as the non-smooth part $\hat{g}$ 
is small in norm compared to $\bar{g}$).
Moreover, the error achieved by the 2-grid method for $N=1024$ is quite
close to the error achieved by the 3-grid method for $N=2048$.
Recall that, as discussed above, these errors reflect solely the time error and 
not the space error, which should be significantly small for the
$N=2048$ grid.
The same conclusion can be drawn by comparing the results
of the 3~grid method for $N=1024$ and of the 4~grid method for $N=2048$.

\begin{table}
\caption{Results for 1D heat equation.  The value in brackets in the second column
is the coarse grid error estimate~\eqref{pract_est}, the values in brackets under 
the matvec values are the tolerance values $\widehat{\tol}$ (the $h$ grid) and
$\widetilde{\tol}$ (coarser grids).}
\label{t:1d}
\centerline{\begin{tabular}{ccccccc}
% cd ~/wrk/doc/matfun/matfun_split/resid_rst/mg
% test_split_b4_2g
% test_split_b4_mg (adjust n=1024)
% CPU timings on eis
\hline\hline
method  & error         & CPU     & \multicolumn{4}{c}{matvecs (tolerances) per grid} \\
        & (estimate)    & time, s & $h$ & $2h$ & $4h$ & $8h$
\\\hline
\multicolumn{7}{c}{grid size $N=1024$}\\
% \phiv & {\tt1.62e-07} & 0.45    & 5568 &          \\
1 grid  & {\tt5.23e-14} & 1.90    & 4215 &      &   \\[1ex]
2 grid  & {\tt4.47e-08} & 0.52    & 25   & 1219 &   \\[-1ex]
      &{\fs\tt(9.9e-03)}&         & {\fs\tt(1.63e-01)}
                                         & {\fs\tt(1.41e-08)} & \\[1ex]
% omega = 0.000e+00, coeff = 1.000e-02, error est. = 9.993e-03
3 grid  & {\tt2.01e-07} & 0.37    & 25   &  444 & 409 \\[-1ex]
      &{\fs\tt(9.6e-03)}&         & {\fs\tt(1.65e-01)}
                                         & {\fs\tt(1.45e-02)}
                                                & {\fs\tt(2.00e-08)}
\\[1ex]
\hline
\multicolumn{7}{c}{grid size $N=2048$}\\
1 grid  & {\tt7.42e-14}   & 6.01     & 14508  &
\\[1ex]
2 grid & {\tt1.82e-08}    & 1.54     & 2             & 4028   \\[-1ex]
       &{\fs\tt(3.9e-02)} &          &{\fs\tt(2.64)} &{\fs\tt(1.41e-08)}
\\[1ex]
3 grid & {\tt5.97e-08}    & 0.45     & 2             & 6       & 1207 \\[-1ex]
       &{\fs\tt(3.7e-02)} &          &{\fs\tt(2.64)} &{\fs\tt(2.33e-01)} 
                                                               &{\fs\tt(2.00e-08)}
\\[1ex]
4 grid & {\tt2.12e-07}    & 0.29     & 2             & 6       &  389       &  395\\[-1ex]
       &{\fs\tt(2.6e-02)} &          &{\fs\tt(2.64)} &{\fs\tt(2.33e-01)}
                                                               &{\fs\tt(2.04e-02)}
                                                                            &{\fs\tt(2.82e-08)}
\\[1ex]
\hline
\end{tabular}}
\end{table}

\added{We now test our CGC approach combined with the Chebyshev polynomial solver.
Since the Chebyshev solver evaluates the matrix exponential rather than the
$\varphi$ matrix function, we have to change the problem setting and take
in~\eqref{test1d} $g(x)\equiv 0$, $u(x,0)=e^{-500(x-0.5)^2}$. We also set 
a smaller time interval length $T=0.001$ (for which $T\|A\|_1>16\,500$
on the $N=2048$ grid). 
In our Chebyshev polynomial solver the Chebyshev expansion is built repeatedly 
for time steps $\Delta t>0$ chosen to satisfy the requirement $\Delta t\|A\|\leqslant 1$.
The Chebyshev solver is then combined with the coarse grid correction approach
in the same way as it is done for the Krylov subspace solver, i.e., the Chebyshev solver
is employed with the residual stopping criteria as indicated in
Algorithms in Figures~\ref{f:alg_2g} and~\ref{f:alg_mg}. 
The only small adjustment made for Chebyshev iterations is
that the tolerance $\widehat{\tol}$ is relaxed to a value at most~0.1
(otherwise a moderate accuracy loss is observed).}

\added{The results for the Chebyshev polynomial solver are presented in Table~\ref{t:1d_c}.
As we see, although our CGC approach seems to work successfully,
for Chebyshev iterations it leads to a smaller efficiency gain 
than for Krylov subspace iterations.  This is not unexpected because, unlike Chebyshev iterations, 
Krylov subspace iterations adapt both to the discrete structure of the spectrum of $A$ and to 
vectors on which the matrix functions act (see, e.g.,~\cite{RateCG}).  
In Chebyshev iterations switching to a coarser 
grid leads to a gain only due to a smaller problem size and to the norm decrease
(as typically $\|\tilde{A}\|<\|A\|$ for adequate discretizations).}

\begin{table}
\caption{\added{Results for 1D heat equation with the Chebyshev polynomial
solver.  The value in brackets in the second column
is the coarse grid error estimate~\eqref{pract_est}, the values in brackets under 
the matvec values are the tolerance values $\widehat{\tol}$ (the $h$ grid) and
$\widetilde{\tol}$ (coarser grids).  Note that the results in the table
are obtained for a different test setting than those in Table~\ref{t:1d}.}}
\label{t:1d_c}
\centerline{\begin{tabular}{ccccccc}
% cd ~/wrk/doc/matfun/matfun_split/resid_rst/mg
% test_split_b4_2g_zerog(true,1e-3)
% test_split_b4_mg_zerog(true,1e-3) (adjust n=1024 and length(v0)<=256 or
%                                    length(v0)<=512 in expm_Chebyshev_mg.m)
% CPU timings on ico
\hline\hline
method  & error         & CPU     & \multicolumn{4}{c}{matvecs (tolerances) per grid} \\
        & (estimate)    & time, s & $h$ & $2h$ & $4h$ & $8h$
\\\hline
\multicolumn{7}{c}{grid size $N=2048$}\\
1 grid  & {\tt1.74e-08}   & 70.9     & 184\,734  &
\\[1ex]
2 grid & {\tt7.73e-05}    & 37.5     & 49\,526       & 46\,233   \\[-1ex]
       &{\fs\tt(1.4e-04)} &          &{\fs\tt(0.1)}  &{\fs\tt(1.41e-08)}
\\[1ex]
3 grid & {\tt2.31e-04}    & 32.0     & 49\,526       & 12\,340 & 11\,583 \\[-1ex]
       &{\fs\tt(5.3e-04)} &          &{\fs\tt(0.1)}  &{\fs\tt(0.1)} 
                                                               &{\fs\tt(2.00e-08)}
\\[1ex]
4 grid & {\tt5.24e-04}    & 29.0     & 49\,526       & 12\,340 &  3068      &  2915\\[-1ex]
       &{\fs\tt(1.6e-03)} &          &{\fs\tt(0.1)}  &{\fs\tt(0.1)}
                                                               &{\fs\tt(2.04e-02)}
                                                                            &{\fs\tt(2.82e-08)}
\\[1ex]
\hline
\end{tabular}}
\end{table}

\subsection{3D heat equation}
In this test we solve~\eqref{ivp} obtained by a standard 7-point second-order 
finite difference discretization of the initial-boundary-value problem
in $u=u(x,y,z,t)$
$$
\begin{gathered}    
u_t =u_{xx}+u_{yy}+u_{zz} +g(x,y,z),\quad (x,y,z)\in [0,1]^3,\\
u(x,y,z,0)=0, \quad g(x,y,z)=e^{-50(x-\frac12)^2 -100(y-\frac12)^2 -50(z-\frac12)^2},
\end{gathered}
$$
where homogeneous Dirichlet boundary conditions are imposed.
We use a uniform $n_x\times n_y\times n_z$ grid with nodes
$(x_i,y_j,z_k)$,
$$
x_i=i /(n_x+1), \quad i=1,\dots,n_x, 
%\quad y_j=\frac{j}{n_y+1},\; i=1,\dots,n_y, 
%\quad z_k=\frac{k}{n_z+1},\; i=1,\dots,n_z. 
$$
and $y_j$, $z_k$ defined similarly.
The grid size is taken to be $80\times 88\times 96$ and $160\times 176\times 192$,
the time interval length $T=0.1$ and the tolerance $\tol=10^{-5}$.
For these two grids we have 
$T\|A\|_1>9\,500$ and
$T\|A\|_1>37\,500$, respectively. 

\begin{table}
\caption{Results for 3D heat equation.  The value in brackets in the second column
is the coarse grid error estimate~\eqref{pract_est}, the values in brackets 
under the matvec values are the tolerance values $\widehat{\tol}$ (the $h$ grid) and
$\widetilde{\tol}$ (coarser grids).}
\label{t:3d}
\centerline{\begin{tabular}{ccccccc}
% cd ~/wrk/doc/matfun/matfun_split/resid_rst/mg/test_3d
% test_split_b4_2g_3d(80,88,96,1e-1,1e-5)
% test_split_b4_mg_3d(80,88,96,1e-1,1e-5)
% CPU timings on eis
\hline\hline
method & error         & CPU     & \multicolumn{4}{c}{matvecs (tolerances) per grid} \\
       & (estimate)    & time, s & $h$ & $2h$ & $4h$ & $8h$
\\\hline
\multicolumn{7}{c}{grid size $80\times 88\times 96$}\\
% phiv & {\tt2.93e-08}    & 25.5    & 992   &        \\
1 grid & {\tt2.75e-08}    & 6.63    & 539   &        
\\[1ex]
2 grid & {\tt1.20e-03}    & 0.55    & 14    & 150   \\[-1ex]
       &{\fs\tt(9.3e-03)} &         & {\fs\tt(1.92e-01)}
                                          & {\fs\tt(2.78e-05)} 
\\[1ex]
3 grid & {\tt5.84e-03}    & 0.37    & 14    &  20 & 43 \\[-1ex]
       &{\fs\tt(2.4e-02)} &         & {\fs\tt(1.92e-01)}
                                          & {\fs\tt(2.60e-02)}
                                                & {\fs\tt(7.61e-05)}
% err.estimate 3 grid: 1.320e-02 + 1.068e-02 = 2.3880e-02
\\[1ex]
\hline
% cd ~/wrk/doc/matfun/matfun_split/resid_rst/mg/test_3d
% test_split_b4_mg_3d(160,176,192,1e-1,1e-5)
% and set the threshold value in phim_Arnoldi_mg.m:
% for 3 grids:
% if length(g0)<=10560*8
%
% for 4 grids:
% if length(g0)<=10560
%
% CPU timings on eis
\multicolumn{7}{c}{grid size $160\times 176\times 192$}\\
1 grid  & {\tt1.19e-09}   & 176      & 1796   &        
\\[1ex]
2 grid  & {\tt3.08e-04}   & 6.7      & 2      & 480    \\[-1ex]
        &{\fs\tt(6.7e-03)}&          &{\fs\tt(3.19)}
                                              &{\fs\tt(2.80e-05}
\\[1ex]
3 grid & {\tt1.51e-03}    & 1.48     & 2      &   5    & 146   &\\[-1ex]
       &{\fs\tt(1.6e-02)} &          &{\fs\tt(3.19)} 
                                              &{\fs\tt(5.38e-01)} 
                                                       &{\fs\tt(7.80e-05)}
% err.est. 3 grid: 9.333e-03 + 7.021e-03 = 1.6354e-02 
\\[1ex]
4 grid & {\tt6.15e-03}    & 1.25     & 2      &   5    &   11  &  27   \\[-1ex]
% err.est. 4 grid: 1.320e-02 + 1.068e-02 + 8.572e-03
       &{\fs\tt(3.2e-02)} &          &{\fs\tt(3.19)}
                                              &{\fs\tt(5.38e-01)}
                                                       &{\fs\tt(7.30e-02)}
                                                               &{\fs\tt(2.13e-04)}
\\[1ex]
\hline
\end{tabular}}
\end{table}

The results of the test runs are shown in Table~\ref{t:3d}.
As we see, the accuracy of the CGC algorithm is significantly lower in this test
problem.  This is expected because much coarser grids are used 
in this test.  Nevertheless, for moderate accuracy requirements
the achieved errors can be viewed as small enough.
The coarse grid error estimates are sharper than in the previous test,
probably due to the nonzero $\omega$ value in this test.
Again, as in the first test, the CGC algorithm 
significantly profits from the relaxed tolerance requirements due to the
smooth--non-smooth splitting of the source vector $\bar{g}$.
As a consequence, going to a finer grid leads to only a moderate
increase of computational work. 
This can be clearly seen by comparing the results obtained by the 2~grid
method on the $80\times 88\times 96$ grid and by the 3~grid method on
the $160\times 176\times 192$ grid: the number of matvec carried out by
the methods on the same grid are roughly the same (respectively,
14 and 5, 150 and 146) and the achieved errors are also similar.
Recalling again that the measured errors are the time errors
and the space error is smaller on the finer grid,  
we come to a conclusion that our CGC algorithm seems to allow
to reach a higher overall accuracy with a slightly increased work. 

\begin{table}
\caption{Results for 3D heat equation for the increased time interval length
$T=1$.  The value in brackets in the second column
is the coarse grid error estimate~\eqref{pract_est}, the values in brackets 
under the matvec values are the tolerance values $\widehat{\tol}$ (the $h$ grid) and
$\widetilde{\tol}$ (coarser grids).}
\label{t:3d_t1}
\centerline{\begin{tabular}{ccccccc}
% cd ~/wrk/doc/matfun/matfun_split/resid_rst/mg/test_3d
% test_split_b4_2g_3d(80,88,96,1e-1,1e-5)
% test_split_b4_mg_3d(80,88,96,1e-1,1e-5)
% CPU timings on eis
\hline\hline
method & error         & CPU     & \multicolumn{4}{c}{matvecs (tolerances) per grid} \\
       & (estimate)    & time, s & $h$ & $2h$ & $4h$ & $8h$
\\\hline
\multicolumn{7}{c}{grid size $80\times 88\times 96$}\\
1 grid & {\tt1.27e-07}    & 8.51    & 779   &        
\\[1ex]
2 grid & {\tt1.16e-03}    & 0.55    & 14    & 150   \\[-1ex]
       &{\fs\tt(9.8e-03)} &         & {\fs\tt(1.92e-01)}
                                          & {\fs\tt(2.78e-05)} 
\\[1ex]
3 grid & {\tt5.64e-03}    & 0.37    & 14    &  20 & 53 \\[-1ex]
       &{\fs\tt(2.5e-02)} &         & {\fs\tt(1.92e-01)}
                                          & {\fs\tt(2.60e-02)}
                                                & {\fs\tt(7.61e-05)}
% err.estimate 3 grid: 1.392e-02 + 1.126e-02 = 2.5180e-02
\\[1ex]
\hline
\end{tabular}}
\end{table}

To check robustness of our approach with respect to the time
interval length $t$, in Table~\ref{t:3d_t1} we present results
for the coarser mesh $80\times 88\times 96$ and 
$T$ increased by a factor of 10, $T=1$.
The source vector $\bar{g}$ is exactly the same, therefore all
the tolerance values remain unchanged, only the coarse
grid error estimates are somewhat different now.
An interesting feature of the CGC approach should be observed.
Since the number of required matvecs in the regular basic method 
(the ``1 grid method'' in the table) is increased by about
45\%{} from 539 (see Table~\ref{t:3d}, line~4) to 779 matvecs, 
one can expect a similar increase
of required matvecs in the multigrid CGC algorithm.
As we see, this turns out to be not the case: the numbers of matvecs
have hardly grown.  This can be explained by a combination of 
two effects:\\
(i)~the CGC method works on the smoothed initial data,
which is profitable for the underlying Krylov subspace method;
\\
(ii)~working on a coarser mesh and with a larger $T$ means
that the parasitic eigenmodes associated with large eigenvalues
of $A$ are damped more effectively, which allows to
keep the Krylov subspace dimension bounded.

\section{Conclusions}
For moderate accuracy requirements and smooth input vectors,
the presented coarse grid correction (CGC) method allows to evaluate
the matrix exponential and $\varphi$ matrix function actions
efficiently.  Since the accuracy of the method is restricted,
upper bounds for the error are derived for the two-grid and multigrid
variants of the method.
To evaluate the coarse grid correction error in practice,
a computable error estimate is proposed.
The estimate can be computed once the coarse grid part of the
computations is carried out.
Numerical experiments demonstrate the efficiency of the approach
and its robustness with respect to space grid refinement and
to the time interval length.
\added{Our CGC approach appears to work more efficiently when combined
with Krylov subspace methods rather than with Chebyshev polynomial
iterations.  This is expected as Krylov subspace methods profit not
only from the smaller operator norm but also from the discrete structure
of the spectrum and the initial vector.}

A number of points for further research can be indicated.
First, capabilities of the approach for nonsymmetric matrices $A$
and for nonsmooth input data should be studied.
Next, it would be interesting to see whether the coarse grid
error term can be made smaller in norm by switching to a less coarse
grid.  If this is indeed possible, an adaptive CGC algorithm
with controllable accuracy can probably be designed. 
Finally, as the proposed CGC approach possesses an inherent
parallelism, its time parallel properties could be explored.
We hope to be able to address these research questions in
the future.

\medskip

\noindent
\textbf{Acknowledgments}
The author would like to thank Leonid Knizhnerman for stimulating discussions
\added{and the anonymous referee for useful suggestions to improve the paper}.

\bibliography{matfun,my_bib}
\bibliographystyle{siamplain}

\end{document}